\newtheorem{theorem}{Theorem}
\newtheorem{conjecture}[theorem]{Conjecture}
\theoremstyle{definition}
\newtheorem{remark}[theorem]{Remark}
\journal{Linear Algebra Appl.}
\begin{document}

\begin{frontmatter}
    \title{A matricial view of the Collatz conjecture}
    \author[1]{Pietro Paparella}
    \ead{pietrop@uw.edu}
    \ead[url]{https://faculty.washington.edu/pietrop/}
    \affiliation[1]{
        organization={Division of Engineering \& Mathematics},
        addressline={University of Washington Bothell}, 
        city={Bothell},
        postcode={98011}, 
        state={WA},
        country={U.S.}
    }

    \begin{abstract}
        In this note, it is shown that the nilpotency of submatrices of a certain class of adjacency matrices is equivalent to the aperiodic Collatz conjecture. 
    \end{abstract}
    
    \begin{keyword}
        adjacency matrix \sep Collatz conjecture \sep Collatz sequence \sep directed graph \sep nilpotent matrix
    
        \MSC[2020] 15A18 \sep 15A15 \sep 11B83 
    \end{keyword}

\end{frontmatter}

\section{Introduction}

Let $f:\mathbb{N} \longrightarrow \mathbb{N}$ be the function defined by 
\begin{align*}
    f(n) =
    \left\{ 
        \begin{array}{cc}
        \displaystyle\frac{n}{2},   & n \equiv 0 \bmod{2},  \\
        \displaystyle\frac{3n+1}{2}, & n \equiv 1 \bmod{2},
        \end{array}
    \right.
\end{align*}
and, for a fixed $n \in \mathbb{N}$, consider the sequence $s(n): \mathbb{N}_0 \longrightarrow\mathbb{N}$ defined by 
\[ 
s_k(n) \coloneqq 
\begin{cases} 
n, & k = 0, \\ 
f^k(n), & k > 0 
\end{cases}. \]
The function $f$ is called the \emph{(shortcut) Collatz function} and the sequence $s(n)$ is called the \emph{Collatz sequence of $n$}. 

It is well-known that if $n$ is a positive integer, then there are three possibilities for the sequence $s(n)$ \cite[Section 2]{lagarias1985}:

\begin{enumerate}
[leftmargin=2\parindent,label=(\roman*)]
    \item \emph{Convergent trajectory}. In this case, there is a positive integer $p$ such that $s_p(n) = f^p(n) = 1$. If $p$ is the smallest integer such that $s_p(n) = 1$, then $p$ is called the \emph{total stopping time (of $n$)}. 
    \item \emph{Nontrivial cycle}. In this case, there is a positive integer $n > 2$ such that $s_k(n) \ne 1$, $\forall k \in \mathbb{N}$ and the sequence $s(n)$ is \emph{periodic}---i.e., there are positive integers $p$ and $q$ such that $s_p(n) = f^p(n) = f^q(n) = s_q(n)$.
    \item \emph{Divergent sequence}. In this case, the sequence $s(n)$ is unbounded, i.e., $\lim_{k \to \infty} s_k(n) = \infty$.   
\end{enumerate}

As is well-known, the \emph{Collatz conjecture} (see, e.g., Lagarias \cite{lagarias1985} and references therein), asserts that all Collatz sequences are convergent. There are many known reformulations of this problem \cite{lagarias20113x1,lagarias20123x1}. The assertion that no nontrivial cyclic trajectories exist will be called the \emph{aperiodic Collatz conjecture}. 

Let $A: \mathbb{N} \times \mathbb{N} \longrightarrow \{ 0, 1 \}$ be the infinite matrix defined by
\begin{equation*}
    a_{ij} = \delta_{f(i),j} =
        \begin{cases}
        1, &  j = f(i),         \\
        0, & \text{otherwise}.
        \end{cases}    
\end{equation*} 
and for $n \in \mathbb{N}$, let $A_n$ be the $n$-by-$n$ matrix whose $(i,j)$-entry is $a_{ij}$, $1 \leqslant i,j \leqslant n$.

Introduced by Alves et al.~\cite{alvesetal2005}, the matrix $A_n$ is the adjacency matrix of the directed graph $\Gamma_n$ with vertices $V_n := \{1,\dots,n\}$ and (directed) edges $E_n := \{ (i,j) \in V_n \times V_n \mid j = f(i) \}$. In addition, they proved the following result. 

\begin{theorem}
[{\cite[Theorem 1 and Lemma 1]{alvesetal2005}}]
The aperiodic Collatz conjecture is equivalent to 
    \begin{equation}
    \label{alvescond}
        \trace(A_n^p) = 
            \begin{cases}
                2, & p \equiv 0 \bmod{2}, \\
                0, & p \equiv 1 \bmod{2}.
            \end{cases}
    \end{equation}
\end{theorem}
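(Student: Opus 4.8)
The plan is to translate the whole statement into walk-counting in the digraph $\Gamma_n$. Recall that $(A_n^p)_{ij}$ counts directed walks of length $p$ from $i$ to $j$, so $\trace(A_n^p) = \sum_{i=1}^{n}(A_n^p)_{ii}$ counts closed walks of length $p$ in $\Gamma_n$. Because $f$ is a function, every vertex of $\Gamma_n$ has out-degree at most $1$ (exactly $1$ when $f(i)\leqslant n$, and $0$ otherwise), so at most one walk of any given length issues from a given vertex; hence each $(A_n^p)_{ii}$ lies in $\{0,1\}$ and $\trace(A_n^p)$ is simply the number of $i\in V_n$ from which a closed walk of length $p$ exists. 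First I would pin down exactly which $i$ these are: the walk from $i$ is $i, f(i), \dots, f^{p}(i)$, it lives inside $\Gamma_n$ precisely when all of its vertices are $\leqslant n$, and it closes up precisely when $f^{p}(i)=i$. If $f^{p}(i)=i$, though, then $i$ is a periodic point of $f$ whose period $d$ divides $p$, and the entire walk is confined to the $d$-element cycle through $i$; so ``all vertices $\leqslant n$'' is equivalent to ``the cycle through $i$ is contained in $\{1,\dots,n\}$''. Consequently $\trace(A_n^p)$ equals the number of integers $i\leqslant n$ that lie on an $f$-cycle $C$ with $\abs{C}$ dividing $p$ and $\max C\leqslant n$.

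The second ingredient is arithmetic and elementary: $f$ has no fixed point, $\{1,2\}$ is an $f$-cycle (namely $1\mapsto 2\mapsto 1$), and since the cycles of a function are pairwise disjoint, every other cycle of $f$ avoids $\{1,2\}$ and therefore consists of integers $\geqslant 3$. Moreover, the aperiodic Collatz conjecture is exactly the statement that $\{1,2\}$ is the \emph{only} cycle of $f$: a nontrivial cycle supplies, through its least element $m>2$, a periodic trajectory $s(m)$ that never equals $1$ (case (ii) above), while conversely any trajectory as in case (ii) is eventually periodic and, being a forward orbit, lands in a cycle that contains no $1$, hence a cycle other than $\{1,2\}$. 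Granting this dictionary, the forward implication drops out: under the conjecture the only cycle of $f$ is $\{1,2\}$, which sits inside $\{1,\dots,n\}$ once $n\geqslant 2$ and has length $2$, so the count above gives $\trace(A_n^p)=2$ for $p$ even and $0$ for $p$ odd --- that is, \eqref{alvescond}. (The identity is to be read for $n\geqslant 2$; for $n=1$ one has $A_1=[0]$, and it fails for even $p$.) For the converse I would argue by contraposition: given a nontrivial cycle $C$ with $\abs{C}=d\geqslant 2$, take $n=\max C\geqslant 3$ and $p=d$; then every element of $C$ contributes to the count, so $\trace(A_n^d)\geqslant d$, and when $d$ is even the two elements of $\{1,2\}$ contribute too (their cycle has length $2\mid d$, and they are disjoint from $C$), giving $\trace(A_n^d)\geqslant d+2$. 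In either case $\trace(A_n^d)\notin\{0,2\}$, contradicting \eqref{alvescond}.

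The only step that demands real care is the first one: one must check that a \emph{returning} walk of length $p$ forces the \emph{entire} cycle through $i$ --- not merely whatever initial segment of it the walk happens to traverse --- to lie in $\{1,\dots,n\}$, since this is precisely what allows $\trace(A_n^p)$ to be described in terms of cycles of the global map $f$ rather than in terms of finite paths in $\Gamma_n$. Once that observation is secured, the remainder is bookkeeping, and the constant $2$ in \eqref{alvescond} appears solely as the length of the trivial cycle $\{1,2\}$; I do not expect any obstacle beyond getting the parity and disjointness accounting exactly right.
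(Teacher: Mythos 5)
Your argument is correct, but note that the paper does not actually prove this theorem---it is imported verbatim from Alves et al.\ via the citation---so there is no in-paper proof to compare against line by line. The closest internal material is the proof of Theorem \ref{mainresult} together with the Remark, and your proposal runs on exactly the same engine: diagonal entries of powers of a $0$--$1$ adjacency matrix count closed walks, and in a functional digraph (out-degree at most one) those walks are governed entirely by the cycles of $f$. What you supply, and what the paper delegates to the reference, is the full dictionary: each $(A_n^p)_{ii}\in\{0,1\}$, a returning walk of length $p$ from $i$ forces $f^p(i)=i$ and sweeps out the entire cycle through $i$, so $\trace(A_n^p)$ is the total size of all $f$-cycles $C$ with $\lvert C\rvert$ dividing $p$ and $\max C\leqslant n$; the trivial cycle $\{1,2\}$ (together with the fixed-point-freeness of $f$ and the disjointness of cycles) then yields the $2$/$0$ dichotomy, while a nontrivial cycle of length $d$ gives $\trace(A_{\max C}^{d})\geqslant d$, plus $2$ more when $d$ is even, which escapes $\{0,2\}$ in either parity. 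The one step you rightly single out---that a closed walk forces the \emph{whole} cycle, not just an initial segment, into $\{1,\dots,n\}$---is handled correctly, since a periodic point's forward orbit is exactly its cycle. Your caveat that \eqref{alvescond} fails for $n=1$ and even $p$ is a genuine (if minor) imprecision in the statement as quoted, and worth flagging; the identity should be read for $n\geqslant 2$.
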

    
Since $f(2) = 1$ and $f(1) = 2$, it follows that
    \[ A_2 = \begin{bmatrix} 0 & 1 \\ 1 & 0 \end{bmatrix}, \]
and
    \begin{equation*}
        A_n =
        \begin{bmatrix}
        A_2 & 0                 \\
        B_n & C_n
        \end{bmatrix},    
    \end{equation*}
whenever $n > 2$. More formally, let $\mathbb{N}_3 \coloneqq \{ n \in \mathbb{N} \mid n \geqslant 3 \}$ and let $C: \mathbb{N}_3 \times \mathbb{N}_3 \longrightarrow \{0,1\}$ be the infinite matrix defined by $c_{ij} = a_{ij}$. For $n\geqslant 3$, let $C_n$ be the $(n-2)$-by-$(n-2)$ matrix whose $(i,j)$-entry is $c_{ij} = a_{ij}$, $3 \leqslant i,j \leqslant n$. For ease of notation, the rows and columns of $C_n$ are indexed by $\{3,\ldots, n\}$.

Denote by $\mathsf{M}_n (\mathbb{C})$ the algebra of $n$-by-$n$ matrices with complex entries. If $A \in \mathsf{M}_n (\mathbb{C})$, then $A$ is called \emph{nilpotent} if there is a positive integer $p$ such that $A^p = 0$. In such a case, the set $\{k\in \mathbb{N} \mid A^k = 0\}$ is nonempty and hence, by the well-ordering principle, contains a smallest element, which is called the \emph{index (of nilpotency) of $A$}.  

\begin{conjecture}
    \label{equivconj}
        If $n\geqslant 3$, then the matrix $C_n$ is nilpotent.
\end{conjecture}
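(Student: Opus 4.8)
The plan is to prove that $C_n$ is nilpotent by establishing the equivalent combinatorial fact that the subdigraph of $\Gamma_n$ induced on $\{3,\dots,n\}$ is acyclic. First I would record the standard criterion that a $(0,1)$-matrix is nilpotent if and only if its digraph contains no directed cycle, in which case the index of nilpotency equals one more than the length (in edges) of a longest directed path; it therefore suffices to show that $C_n$ is the adjacency matrix of a directed acyclic graph. Because every vertex of $\Gamma_n$ has out-degree exactly one (the single edge $i \to f(i)$), a vertex $i$ has out-degree zero in the induced subdigraph precisely when $f(i) \notin \{3,\dots,n\}$, i.e.\ when $f(i) \in \{1,2\}$ or $f(i) > n$. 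A directed walk in $C_n$ is thus exactly an initial segment of a Collatz trajectory that stays inside $\{3,\dots,n\}$, and acyclicity is equivalent to the statement that the $f$-orbit of every $i \in \{3,\dots,n\}$ eventually leaves $\{3,\dots,n\}$, either by dropping into $\{1,2\}$ or by exceeding $n$.

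Next I would reduce to excluding cycles. Since no $i \geq 3$ satisfies $f(i)=i$, any directed cycle in $C_n$ has length at least two and is a sequence $i_1 \to i_2 \to \dots \to i_\ell \to i_1$ of integers in $\{3,\dots,n\}$, i.e.\ a nontrivial Collatz cycle with smallest element at least $3$. For a \emph{fixed} $n$ this is a finite verification: one iterates $f$ from each of the $n-2$ starting vertices and checks that every orbit exits $\{3,\dots,n\}$ before repeating, so that the first-exit time $\phi(i)$ is finite and drops by one along each edge, certifying acyclicity. The real content of the conjecture is the \emph{uniformity} of this behavior in $n$, i.e.\ the nonexistence of any nontrivial Collatz cycle whatsoever.

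To attack the uniform statement I would analyze a hypothetical cycle through its maximal element $M$. Maximality forces the step out of $M$ to be a decrease, so $M$ is even and $f(M)=M/2$, while the step into $M$ is an increase, so its predecessor is odd and equals $(2M-1)/3$, whence $M \equiv 2 \pmod 3$. Writing the cycle with $E$ even and $O$ odd steps, the telescoping identity $\prod_k (i_{k+1}/i_k) = 1$ gives, after taking logarithms, $E\log 2 = \sum_{\text{odd }k}\log\tfrac{3i_k+1}{2i_k}$; since each summand lies in $(\log\tfrac32,\log\tfrac53]$ for $i_k\geq 3$, this confines the ratio $O/E$ to a bounded interval near $\log 2/\log\tfrac32$. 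Rewriting the same identity exactly as $(E+O)\log 2 - O\log 3 = \sum_{\text{odd }k}\log(1+\tfrac{1}{3i_k})$, whose right-hand side is positive and tends to $0$ as the entries grow, I would then invoke a Baker--W\"{u}stholz lower bound on the linear form $(E+O)\log 2 - O\log 3$ in the independent logarithms $\log 2,\log 3$, forcing either the cycle length or the minimal entry below an explicit, computable bound, and finish by the finite verification of the previous paragraph.

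The main obstacle is exactly this last synthesis. The structural constraints at the maximum together with the transcendence bound eliminate short cycles and cycles with small entries, but converting them into a descent valid \emph{uniformly over all admissible cycle lengths} $\ell$ is the unresolved core. Concretely, I would try to manufacture an explicit potential of the form $\phi(i)=\log i + (\text{a correction term encoding the even/odd pattern})$ that strictly decreases along every edge of the induced subdigraph, since any provably decreasing $\phi$ yields acyclicity and hence nilpotency outright; proving the correction term controllable without already assuming the absence of cycles is where the argument currently stalls, and I expect this number-theoretic step, rather than any of the matrix-theoretic reductions, to be the genuine difficulty.
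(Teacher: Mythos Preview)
The statement you are trying to prove is labeled a \emph{Conjecture} in the paper, and the paper does not prove it. What the paper proves is Theorem~\ref{mainresult}: Conjecture~\ref{equivconj} is \emph{equivalent} to the aperiodic Collatz conjecture. Your first two paragraphs in fact recover exactly this equivalence---you correctly reduce nilpotency of $C_n$ to acyclicity of the induced subdigraph on $\{3,\dots,n\}$, and you correctly observe that acyclicity \emph{for all} $n\geqslant 3$ is precisely the statement that no nontrivial Collatz cycle exists. That is the full content of the paper's proof of Theorem~\ref{mainresult}, and on that part your reasoning matches.

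The genuine gap is in your third and fourth paragraphs, where you attempt to go further and actually prove the conjecture outright via linear forms in logarithms. This is a well-trodden line (Steiner, Simons--de Weger, and others have used Baker-type bounds to exclude cycles of restricted combinatorial shape or bounded length), but it does not close to a full proof, and you acknowledge as much when you say the argument ``stalls.'' Concretely, the Baker--W\"{u}stholz lower bound on $(E+O)\log 2 - O\log 3$ is polynomial in $\log(E+O)$, while the upper bound coming from $\sum_{\text{odd }k}\log(1+\tfrac{1}{3i_k})$ is only $O(O/\min_k i_k)$; comparing the two forces the minimal entry of the cycle to be at most exponential in a power of the cycle length, which leaves infinitely many candidate cycles uneliminated. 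Your proposed potential-function fix would need to absorb exactly this slack, and there is no known way to do so. In short: the reduction is right and matches the paper, but the paper stops at the equivalence because the remaining step is the open aperiodic Collatz conjecture itself.
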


The following result, which is the central aim of this work, reveals the import of Conjecture \ref{equivconj}.

\begin{theorem}
    \label{mainresult}
        The aperiodic Collatz conjecture is equivalent to Conjecture \ref{equivconj}.
\end{theorem}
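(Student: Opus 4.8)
The plan is to derive Theorem~\ref{mainresult} from the Alves et al.\ theorem by exploiting the block-triangular form of $A_n$ together with the classical characterization of nilpotent matrices in terms of vanishing power-traces. The whole argument will be a chain of biconditionals, so each step must be stated as an equivalence rather than a one-way implication.

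Fix $n \geqslant 3$. Since $A_n$ is block lower triangular with diagonal blocks $A_2$ and $C_n$ (as exhibited above), $A_n^p$ is block lower triangular with diagonal blocks $A_2^p$ and $C_n^p$ for every $p \in \mathbb{N}$, whence $\trace(A_n^p) = \trace(A_2^p) + \trace(C_n^p)$. Because $A_2^2 = I_2$, we have $A_2^p = I_2$ for $p$ even and $A_2^p = A_2$ for $p$ odd, so $\trace(A_2^p)$ equals $2$ when $p$ is even and $0$ when $p$ is odd. Subtracting, condition~\eqref{alvescond} holds for this particular $n$ and every $p$ if and only if $\trace(C_n^p) = 0$ for every $p \in \mathbb{N}$.

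The next step is to invoke the standard fact that a matrix $M \in \mathsf{M}_m(\mathbb{C})$ is nilpotent if and only if $\trace(M^p) = 0$ for all $p \in \mathbb{N}$, equivalently for $1 \leqslant p \leqslant m$. Writing $\trace(M^p) = \sum_i \lambda_i^p$ for the eigenvalues $\lambda_i$ of $M$ counted with multiplicity, the forward implication is immediate; the converse follows from Newton's identities, since vanishing of the first $m$ power sums forces every elementary symmetric function of the $\lambda_i$ to vanish, so the characteristic polynomial of $M$ equals $t^m$ and $M$ is nilpotent by the Cayley--Hamilton theorem. Applying this with $M = C_n$, we conclude that \eqref{alvescond} holds for a given $n \geqslant 3$ and all $p$ if and only if $C_n$ is nilpotent.

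It remains to assemble the equivalences. By the Alves et al.\ theorem, the aperiodic Collatz conjecture is equivalent to \eqref{alvescond} holding for every $n$ and every $p$; for $n = 2$ this asserts exactly $\trace(A_2^p) = 2$ for even $p$ and $0$ for odd $p$, which holds by the computation above, and for each $n \geqslant 3$ it is, by the preceding two steps, equivalent to the nilpotency of $C_n$. Hence the aperiodic Collatz conjecture is equivalent to the statement that $C_n$ is nilpotent for every $n \geqslant 3$, which is precisely Conjecture~\ref{equivconj}. I do not anticipate a genuine obstacle here: the proof is an assembly of elementary ingredients, the only mild care being the alignment of the range of $n$ in the Alves et al.\ theorem with the range $n \geqslant 3$ of Conjecture~\ref{equivconj}, handled by noting that the small cases are automatic. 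The single imported non-triviality is the power-trace characterization of nilpotency, which is classical.
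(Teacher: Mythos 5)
Your proof is correct, but it takes a genuinely different route from the paper's. You treat the Alves et al.\ equivalence as a black box and reduce Theorem~\ref{mainresult} to pure matrix algebra: the block-triangular trace identity $\trace(A_n^p) = \trace(A_2^p) + \trace(C_n^p)$, the computation $A_2^2 = I_2$, and the power-trace characterization of nilpotency (part \ref{tracezero} of Theorem~\ref{nilthm}, which you re-derive via Newton's identities and Cayley--Hamilton). In effect you upgrade the paper's Remark---which only records that Conjecture~\ref{equivconj} implies \eqref{alvescond}---into a biconditional by observing that $\trace(C_n^p)=0$ for all $p$ is \emph{equivalent} to, not merely implied by, nilpotency of $C_n$. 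The paper instead argues directly on the digraph $\Gamma_n$: the aperiodic Collatz conjecture says the subdigraph on $V_n\setminus\{1,2\}$ is acyclic, so every diagonal entry of $C_n^p$ vanishes; conversely a nontrivial cycle through $m$ produces a positive $(m,m)$-diagonal entry of $C_m^{q-p}$ and hence a positive trace. What your approach buys is brevity and an explicit structural link to the prior result (Conjecture~\ref{equivconj} is precisely \eqref{alvescond} localized to the trailing block); what the paper's buys is self-containedness---it never needs the Alves et al.\ theorem---and a direct combinatorial explanation of \emph{why} nilpotency of $C_n$ encodes the absence of nontrivial cycles. One point to make explicit in your write-up: the quantification in the cited equivalence must be read as ``for all $n \geqslant 2$ and all $p$,'' since for $n=1$ one has $A_1 = [\,0\,]$ and \eqref{alvescond} fails at even $p$; you flagged the range issue but dismissed the small cases as ``automatic,'' which is true for $n=2$ but not for $n=1$.
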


\begin{remark}
If 
\[ M =
\begin{bmatrix}
A & 0 \\
B & C
\end{bmatrix} \in \mathsf{M}_n(\mathbb{C}), \]
then a straightforward proof by induction reveals that
\[ M^p =
\begin{bmatrix}
A^p                                                 & 0     \\
\displaystyle\sum_{k=0}^{p-1} C^k B A^{p - 1 - k}  & C^p
\end{bmatrix}, \forall p \in \mathbb{N}. \]
Consequently, $\trace(A_n^p) = \trace(A_2^p) + \trace(C_n^p)$ and Conjecture \ref{equivconj} implies \eqref{alvescond} since 
\begin{equation*}
\trace(A_2^p) = 
\begin{cases}
2, & p \equiv 0 \bmod{2}, \\
0, & p \equiv 1 \bmod{2}.
\end{cases}
\end{equation*}
\end{remark}

\section{Notation \& Background}

The following standard matrix-theoretic notation is adopted throughout. If $A \in \mathsf{M}_n(\mathbb{C})$, then 

\begin{itemize}
[leftmargin=\parindent]
    \item the $(i,j)$-entry of $A$ is denoted by $a_{ij}$, $a_{i,j}$, or $(A)_{ij}$; and
    \item $\trace(A) \coloneqq \sum_{k=1}^n a_{kk}$.
\end{itemize}

The following result lists several characterizations of nilpotency that are relevant to this work (for other characterizations, see, e.g., Horn and Johnson \cite{hj2013}). 

\begin{theorem}
\label{nilthm}
If $A \in \mathsf{M}_n(\mathbb{C})$, then the following are equivalent:
\begin{enumerate}
[leftmargin=2\parindent,label=\roman*)]
    \item $A$ is nilpotent;
    \item $A^n = 0$ \cite[p.~109]{hj2013}; and
    \item \label{tracezero} $\trace(A^p) = 0,\ \forall p \in \mathbb{N}$ \cite[Problem 2.4.P10]{hj2013}.
\end{enumerate}
\end{theorem}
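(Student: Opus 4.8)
The plan is to prove the three conditions equivalent by routing every implication through the spectral characterization of nilpotency: a matrix $A \in \mathsf{M}_n(\mathbb{C})$ is nilpotent if and only if every eigenvalue of $A$ is zero, equivalently its characteristic polynomial is $x^n$. This follows from Schur's unitary triangularization (or the Jordan canonical form): $A$ is similar to an upper-triangular matrix $T$ whose diagonal entries are the eigenvalues $\lambda_1, \dots, \lambda_n$ of $A$, counted with algebraic multiplicity. With this in hand, the equivalence of i) and ii) is quick. That ii) implies i) is immediate from the definition, taking $p = n$. For the converse, if $A$ is nilpotent then $A^p = 0$ for some $p$; since the eigenvalues of $A^p$ are the powers $\lambda_i^p$, each $\lambda_i^p = 0$, so every $\lambda_i = 0$, the characteristic polynomial is $x^n$, and the Cayley--Hamilton theorem yields $A^n = 0$.

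For i) implies iii), observe that if $A$ is nilpotent then, as above, every eigenvalue of $A$ is zero; passing to $T^p$, which is again upper-triangular with diagonal $\lambda_1^p, \dots, \lambda_n^p$, gives $\trace(A^p) = \trace(T^p) = \sum_{i=1}^n \lambda_i^p = 0$ for every $p \in \mathbb{N}$.

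The substantive implication is iii) $\Rightarrow$ i), and this is where I expect the only real difficulty. Assume $\trace(A^p) = 0$ for all $p \in \mathbb{N}$, and suppose for contradiction that $A$ has a nonzero eigenvalue. Let $\mu_1, \dots, \mu_r$ be the distinct nonzero eigenvalues of $A$, with algebraic multiplicities $m_1, \dots, m_r \in \mathbb{N}$. Since zero eigenvalues contribute nothing to any power sum, $\trace(A^p) = \sum_{j=1}^r m_j \mu_j^p$ for every $p$. Imposing the vanishing for $p = 1, \dots, r$ gives a homogeneous linear system in $(m_1, \dots, m_r)$ whose coefficient matrix is the Vandermonde matrix $\left( \mu_j^{\,p-1} \right)_{p,j=1}^r$ right-multiplied by $\operatorname{diag}(\mu_1, \dots, \mu_r)$. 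The $\mu_j$ are distinct, so the Vandermonde factor is invertible, and nonzero, so the diagonal factor is invertible; hence the system forces $m_1 = \cdots = m_r = 0$, contradicting $m_j \geqslant 1$. Therefore $A$ has no nonzero eigenvalue, so by the spectral characterization $A$ is nilpotent. Alternatively, one can feed the vanishing power sums $\trace(A), \dots, \trace(A^n)$ into Newton's identities over $\mathbb{C}$ to conclude that every elementary symmetric function of the eigenvalues vanishes, so the characteristic polynomial is $x^n$. Combining the four implications yields i) $\Leftrightarrow$ ii) $\Leftrightarrow$ iii).
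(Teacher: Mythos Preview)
Your argument is correct and is essentially the standard textbook proof. Note, however, that the paper does not supply its own proof of this theorem: it is stated as background and the individual equivalences are attributed to Horn and Johnson \cite{hj2013} (p.~109 for ii) and Problem~2.4.P10 for iii)). So there is nothing to compare against beyond observing that your Schur-triangularization/Cayley--Hamilton route for i)$\Leftrightarrow$ii) and your Vandermonde (equivalently, Newton's identities) argument for iii)$\Rightarrow$i) are exactly the arguments one finds in that reference.
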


If $V_n = \{ 1,\ldots, n \}$ and $E \subseteq V_n \times V_n$, then $\Gamma = (V_n,E)$ is called a \emph{directed graph} or \emph{digraph}. If $A = [a_{ij}]$ is the $n$-by-$n$ matrix such that 
\[ a_{ij} \coloneqq \begin{cases} 1, & (i,j) \in E, \\ 0, & (i,j) \notin E, \end{cases} \]
then $A$ is called the \emph{adjacency matrix of $\Gamma$}. If $W = \{ (i_0,i_1),\ldots,(i_{\ell-1},i_\ell) \} \subseteq E$, $\ell \geqslant 1$, then $W$ is called a \emph{(directed) walk (of length $\ell$)}. If $i_0 = i_\ell$, then $W$ is called a \emph{(directed) cycle (of length $\ell$)}. It is well-known that if $A \in \mathsf{M}_n(\mathbb{C})$ and $p \geqslant 2$, then 
\begin{equation*}
\label{ijentryofmatrixpower} 
(A^p)_{ij} = \sum_{k_1,\dots,k_{p-1} = 1}^n \left[ \prod_{\ell = 1}^p a_{k_{\ell-1},k_\ell} \right],    
\end{equation*} 
where $k_0 \coloneqq i$ and $k_p \coloneqq j$. In the context of adjacency matrices, the $(i,j)$-entry of $A^p$ yields the number of directed walks of length $p$ of $\Gamma$ beginning at node $i$ and terminating at node $j$. In particular, the $(i,i)$-entry of $A^p$ is the number of directed cycles of length $p$ that originate and terminate at node $i$.

\section{Proof of Main Result}

\begin{proof}[Proof of Theorem \ref{mainresult}]
Suppose that the aperiodic Collatz conjecture is true. If $n > 2$, then the subdigraph of $\Gamma_n$ with vertices $V_n\backslash \{1,2\}$ and edges 
$$E_n \backslash \{(1,2), (2,1), (4,2) \}$$ 
does not have directed cycles. Thus, every diagonal entry of $C_n^p$ is zero. Consequently, $\tr (C_n^p) = 0$, $\forall p \in \mathbb{N}$, and $C_n$ is nilpotent.

Conversely, suppose that $C_n$ is nilpotent $\forall n \in \mathbb{N}_3$ and, for contradiction, that the aperiodic Collatz conjecture is false. 

If there is a nontrivial cycle, then there is a positive integer $n > 2$ such that $s_k(n) \ne 1$, $\forall k \in \mathbb{N}$ and positive integers $p$ and $q$ such that $s_p(n) = f^p(n) = f^q(n) = s_q(n)$. Without loss of generality, it may be assumed that $p < q$. If $m \coloneqq s_p(n)$, then $s_{q-p}(m) = m$ and $\Gamma_m$ contains a cycle of length $q-p$. Consequently, the $(m-2,m-2)$-entry of $C_m^{q-p}$ is positive. As $C_m$ is nonnegative, it follows that $\trace(C_m^{q-p}) > 0$, which contradicts part \ref{tracezero} of Theorem \ref{nilthm}.
\end{proof}

\bibliographystyle{abbrv}
\bibliography{collatz}

\begin{thebibliography}{1}

\bibitem{alvesetal2005}
J.~F. Alves, M.~M. Gra\c{c}a, M.~E. Sousa~Dias, and J.~Sousa~Ramos.
\newblock A linear algebra approach to the conjecture of {C}ollatz.
\newblock {\em Linear Algebra Appl.}, 394:277--289, 2005.

\bibitem{hj2013}
R.~A. Horn and C.~R. Johnson.
\newblock {\em Matrix analysis}.
\newblock Cambridge University Press, Cambridge, second edition, 2013.

\bibitem{lagarias1985}
J.~C. Lagarias.
\newblock The {$3x+1$} problem and its generalizations.
\newblock {\em Amer. Math. Monthly}, 92(1):3--23, 1985.

\bibitem{lagarias20113x1}
J.~C. Lagarias.
\newblock The {$3x+1$} problem: An annotated bibliography (1963--1999) (sorted
  by author), 2011.

\bibitem{lagarias20123x1}
J.~C. Lagarias.
\newblock The {$3x+1$} problem: An annotated bibliography, ii (2000-2009),
  2012.

\end{thebibliography}

\end{document}